\newtheorem{theorem}{Theorem}[section]
\newtheorem{lemma}[theorem]{Lemma}
\newcommand{\ol}[1]{#1^{-1}}
\newcommand{\Xpm}{X^\pm}
\newcommand{\CF}{\mathcal{CF}}
\DeclareMathOperator{\FIM}{FIM}
\DeclareMathOperator{\FG}{FG}
\begin{document}

\title{Free inverse monoids are co-context-free}
\author[T.M. Brough]{Tara Macalister Brough}
\address{Centro de Matemática e Aplicações, Universidade Nova de Lisboa}
\email{tarabrough@gmail.com}
\author[M. Johnson]{Marianne Johnson}
\address{Department of Mathematics, University of Manchester}
\email{marianne.johnson@manchester.ac.uk}
\author[M. Kambites]{Mark Kambites}
\address{Department of Mathematics, University of Manchester}
\email{mark.kambites@manchester.ac.uk}
\author[C-F. Nyberg-Brodda]{Carl-Fredrik Nyberg-Brodda}
\address{June E. Huh Centre for Mathematical Challenges, Korea Institute for Advanced Study (KIAS)}
\email{cfnb@kias.re.kr}

\begin{abstract}
We prove (using grammars) that the free inverse monoid of every finite rank has co-context-free word problem.  Equivalently, the co-word problem of the free inverse monoid of every finite rank is context-free.
\end{abstract}

\date{\today}
\keywords{Word problems; co-word problems; free inverse monoids.}
\maketitle

\section{Introduction}
The word problem of a monoid $M$ with respect to a finite generating set
$A$ is the language  ${\rm WP}(M, A) = \{u\#v^{\rm rev} : u =_S v, u, v \in A^*\},$
where $\#$ is a symbol not in $A$ and $v^{\rm rev}$ denotes the reverse of the word $v$. If $M$ is an inverse monoid and $A$ is a symmetric generating set for $M$, then (as far as language-theoretic properties are concerned) the above problem is equivalent to
$\{u\#\ol{v} : u =_S v, u, v \in A^*\},$
where $u\#\ol{v}$ is obtained from $u\#v^{\rm rev}$ by replacing every symbol after the $\#$ by its inverse.   Word problem languages for free inverse monoids of finite rank were studied by the first author \cite{Brough2018}, who showed that these languages are not context-free, nor (except for rank~1) even an intersection of finitely many context-free languages, but they are recognised by checking stack automata (and hence ET0L). In this short note, we prove that the word problem of a free inverse monoid is always \emph{co$\CF$} (the complement of a context-free language).  This was shown for rank $1$ in \cite{Brough2018}, but not considered there for higher ranks.

For groups, the word problem being co$\CF$ is equivalent to the co-word problem (the complement of the word problem) being context-free, and this property is independent of the choice of generating set, hence it makes sense to refer to a group with co$\CF$ word problem as a \emph{co$\CF$ group}.  These groups were introduced in \cite{Holt2005}, and include several interesting examples such as the Higman--Thompson groups \cite{Lehnert2007}.

For an inverse monoid $M$, we define the co-word problem of $M$ with respect to $A$ as ${\rm coWP}(M,A) = \{u\#v^{-1}: u\neq_M v, u,v\in (A^\pm)^*\}$.
While this is not complementary to ${\rm WP}(M,A)$ in $(A^\pm)^*$, the words which are in neither the word problem nor the co-word problem form a regular language, and thus the word problem being co$\CF$ is still equivalent to the co-word problem being context-free.
Moreover, these properties are still independent of the choice of generating set, and so we may speak of a 
\emph{co$\CF$ monoid} in the same way as for groups.



\section{Preliminaries}
Free inverse monoids are for our purposes here most naturally defined by a description first hinted at by Scheiblich \cite{Scheiblich1973} and given a geometric interpretation by Munn \cite{Munn1974} in terms of what are now called \textit{Munn trees}.
Let $X$ be a set and for each $x \in X$ let $\ol{x}$ be a new symbol not contained in $X$, let $\ol{X} = \{\ol{x}: x \in X\}$ and $\Xpm = X \cup \ol{X}$. 
The \emph{Munn tree} of a word $w \in (\Xpm)^*$ is a bi-rooted tree $(\varepsilon, T_w, g_w)$, where $T_w$ is the subtree of the Cayley graph of the free group $\FG(X)$ obtained as the set of vertices and edges when starting at vertex $\varepsilon$ and reading $w = w_1\cdots w_n$ from left to right following the forward edge labelled $w_i$ if $w_i \in X$ and the backwards edge labelled $w_i$ if $w_i \in \ol{X}$, and $g_w$ is the  final vertex of the path traced. The vertices of each Munn tree are therefore labelled by elements of the free group (i.e. reduced words in the generators $\Xpm$), and it is clear from the description above that the Munn tree $T_w$ contains vertices $\varepsilon$ and $g_w$, as the start and terminal vertex of the path traced by reading the word through this graph.  The relations of the free inverse monoid $\FIM(X)$ ensure that two words produce the same Munn tree if and only if they represent the same element of $\FIM(X)$. Thus if $m \in \FIM(X)$ we may unambiguously write $T_m$ for the corresponding Munn tree. Moreover, the product of two elements in $\FIM(X)$ can easily be computed using their corresponding Munn trees: the product of $(\varepsilon, T_w, g_w)$ and $(\varepsilon, T_v, g_v)$ is $(\varepsilon, T_w \cup g_wT_v, g_wg_v)$, where $g_wT_v$ denotes the image of $T_v$ under the obvious left action of
$g_w$ and the union is taken inside the Cayley graph of $\FG(X)$. 

\section{Idempotents avoiding a rooted branch} 

The idempotents in $\FIM(X)$ are all elements whose Munn trees have terminal vertex $\varepsilon$, and so in particular any word representing an idempotent has even length, and moreover any two idempotent elements commute. For any $x\in \Xpm$, we say that an idempotent $e$ in $\FIM(X)$ \emph{avoids $x$} if $T_e$ does not contain the edge between the vertices $\varepsilon$ and $x$.
We write $L({\mathcal E})$ and $L({\mathcal E}_x)$ for
the sublanguages of $(\Xpm)^*$ representing, respectively, all idempotents in $\FIM(X)$, and the idempotents in $\FIM(X)$ avoiding $x$.

\begin{lemma}\label{idpt}
Let $X$ be a finite set and let $x \in \Xpm$.
\begin{enumerate}
	\item The language $L({\mathcal E})$ is generated 
	by the following grammar, with start symbol $E$:
	\[ E\rightarrow EE \mid xE\ol{x} \mid \varepsilon, \quad x\in \Xpm\]
\item The language $L({\mathcal E}_x)$ is generated 
by the grammar given by the following productions for all $a \in \Xpm$ and with start symbol $Z_x$:
\[ Z_a\rightarrow Z_a Z_a \mid y Z_{\ol{y}} \ol{y} \mid \varepsilon, \quad y\in \Xpm\setminus \{a\}.\]
\end{enumerate}  
\end{lemma}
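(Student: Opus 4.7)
The plan is to treat each word $w \in (\Xpm)^*$ as a walk in the Cayley graph of $\FG(X)$ starting at $\varepsilon$: then $w$ represents an idempotent precisely when this walk is closed at $\varepsilon$, and avoids $x$ when moreover the walk never traverses the edge $\varepsilon \leftrightarrow x$. Both parts can be handled uniformly by proving soundness (the grammar generates only such words) by induction on derivations and completeness (every such word is derivable) by induction on word length.

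Soundness is essentially routine from the Munn tree picture. The productions $E\to EE$ and $Z_a \to Z_aZ_a$ concatenate two closed walks at $\varepsilon$, and by induction neither summand uses the forbidden edge, so neither does the concatenation. The production $E \to xE\ol{x}$ surrounds a closed walk at $\varepsilon$ by the steps $\varepsilon \to x$ and $x\to \varepsilon$. For $Z_a \to y Z_{\ol{y}} \ol{y}$ with $y \neq a$, by induction the middle piece represents an idempotent avoiding $\ol{y}$, so after left-translation by $y$ it traces a walk from $y$ back to $y$ confined to the subtree rooted at $y$ and never meeting $\varepsilon$; the full word thus closes at $\varepsilon$ and uses only the edge $\varepsilon\leftrightarrow y$ incident at the root, which differs from $\varepsilon\leftrightarrow a$ since $y\neq a$.

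For completeness, given a nonempty word $w$ of the required type, let $y = w_1$ and let $i\geq 2$ be the smallest index at which the walk returns to $\varepsilon$. The key geometric step is that between steps $1$ and $i$ the walk is confined to the component of $y$ in the Cayley graph with the edge $\varepsilon\leftrightarrow y$ deleted, and because $\FG(X)$ is a tree this component contains a unique vertex adjacent to $\varepsilon$, namely $y$ itself; hence $w_i = \ol{y}$. The subword $w_2\cdots w_{i-1}$, after left-translation by $\ol{y}$, therefore represents an idempotent avoiding $\ol{y}$ (the image of the deleted edge under the shift), is strictly shorter than $w$, and by induction is derivable from $Z_{\ol{y}}$ (and hence also from $E$). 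In part (2) the avoidance of $x$ forces $y\neq x$, so $Z_x \to y Z_{\ol{y}} \ol{y}$ applies; in part (1) we simply use $E \to xE\ol{x}$. The suffix $w_{i+1}\cdots w_n$ is shorter and satisfies the same hypothesis as $w$, completing the induction via $E\to EE$ or $Z_x\to Z_xZ_x$.

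The main obstacle, and the step most easily mishandled, is tracking the avoidance condition through this shift: one must recognise that after translating the middle subword back to base at $\varepsilon$, the edge to be dodged is $\varepsilon\leftrightarrow \ol{y}$ rather than $\varepsilon\leftrightarrow y$ or $\varepsilon\leftrightarrow x$, which is exactly what motivates the indexed family $\{Z_a : a \in \Xpm\}$ of nonterminals and the asymmetric shape of the production $Z_a \to y Z_{\ol{y}} \ol{y}$. Everything else follows from the fact that the Cayley graph of $\FG(X)$ is a tree and the first-return decomposition respects subtrees.
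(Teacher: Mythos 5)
Your proposal is correct and follows essentially the same route as the paper: soundness by induction on derivations using the Munn-tree/walk picture, and completeness by induction on length via the first-return-to-$\varepsilon$ decomposition, with the same key observations that the tree structure forces $w_i=\ol{w_1}$ at the first return and that the inner factor, after translation, must avoid $\ol{y}$. The only cosmetic difference is that you fold the paper's two cases (proper idempotent prefix versus none) into a single uniform step by always splitting at the first return and then applying $E\to EE$ or $Z_x\to Z_xZ_x$ to the remainder.
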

\begin{proof}
(1) Clearly each  word generated by the grammar is an idempotent element of $\FIM(X)$. We prove the converse by induction on the (even) length of words representing an idempotent of $\FIM(X)$. The empty word is produced by the grammar. Suppose every idempotent element of $\FIM(X)$ that can be expressed as a product of strictly fewer than $2m$ generators is generated by the grammar, and let $u$ be an idempotent which can be expressed as a product of $2m$ but no fewer generators, say $u=u_1 \cdots u_{2m}$ where each $u_i \in \Xpm$. If a proper prefix $u_1\cdots u_i$ with $i<2m$ is idempotent, then by considering the corresponding Munn trees it is clear that $u_{i+1}\cdots u_{2m}$ must also be idempotent (to ensure that the terminal vertex is $\varepsilon$). If no proper prefix of $u$ is idempotent, then the path traced by $u$ in the Cayley graph of the free group returns to the start vertex $\varepsilon$ exactly once, and so it follows that $u_{2m} = u_1^{-1}$, and $u_2 \cdots u_{2m-1}$ is idempotent. 
In both cases, we have by induction that $u$ is generated by the above grammar.

(2) Similarly, each word generated by the given grammar with start symbol $Z_x$ is an idempotent avoiding $x$, whilst if $u$ is an idempotent avoiding $x$ then either (i) $u$ is the empty word, (ii) $u$ factorises as a product of two idempotents each avoiding $x$, or else (iii) the path traced by $u$ in the Cayley graph of the free group returns to the start vertex $\varepsilon$ exactly once, so that $u=yey^{-1}$  for some $y \in \Xpm$ and some idempotent $e$. In the first two cases it is clear that $u$ is produced by the grammar. In the third case, since $u$ avoids $x$ we must have $y\neq x$, and since the path of $u$ returns to the start vertex exactly once we must also have that $e$ avoids $y^{-1}$. The result now follows by induction on the length of $u$. 
\end{proof}

\section{Main Result}
\begin{theorem}
The free inverse monoid of every finite rank has co-context-free word problem.
\end{theorem}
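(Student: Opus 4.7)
The plan is to decompose the co-word problem $L_{\rm coWP}$ of $\FIM(X)$ as $L_A \cup L_{B_1} \cup L_{B_2}$, where $L_A = \{u\#v^{-1} : g_u \neq g_v\}$, $L_{B_1} = \{u\#v^{-1} : T_u \not\subseteq T_v\}$, and $L_{B_2} = \{u\#v^{-1} : T_v \not\subseteq T_u\}$, and to show each piece is context-free. This decomposition captures the co-word problem exactly, since $u =_{\FIM(X)} v$ if and only if $g_u = g_v$ and $T_u = T_v$.

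The set $L_A$ is essentially the co-word problem of the free group: the set of $w \in (\Xpm)^*$ with $w \neq_{\FG(X)} \varepsilon$ is context-free (free groups are co$\CF$), and $L_A$ is obtained from it by inverse homomorphism deleting $\#$, intersected with the regular language $(\Xpm)^* \# (\Xpm)^*$. The reverse-and-invert operation $u\#v^{-1} \mapsto v\#u^{-1}$ is context-free-preserving (as a composition of reversal with a letter-wise homomorphism) and swaps $L_{B_1}$ with $L_{B_2}$, so it suffices to treat $L_{B_1}$.

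For $L_{B_1}$, whenever $T_u \not\subseteq T_v$ one may pick (by descending the unique path from $\varepsilon$ to a vertex of $T_u \setminus T_v$ closest to $\varepsilon$) a reduced word $y = z_1 \cdots z_{j+1}$ with $z_1 \cdots z_j \in T_v$ and $y \in T_u \setminus T_v$. Such a $u$ decomposes as $u = E_0 z_1 E_1 \cdots z_{j+1} E_{j+1} U$, with $E_i \in L(\mathcal{E})$ and $U \in (\Xpm)^*$ arbitrary; and such a $v$ decomposes recursively as $v = F_0 z_1 F_1 \cdots F_{j-1} z_j W'$, where $F_i \in L(\mathcal{E}_{z_{i+1}})$ and $W'$ lies in the language $W_{z_{j+1}}$ of words whose Munn tree avoids the single vertex $z_{j+1}$. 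As a preliminary step, Lemma~\ref{idpt}(2) yields that each $W_x$ for $x \in \Xpm$ is context-free via mutually recursive productions $V_x \to Z_x \mid Z_x \, c \, V_{\ol c}$ for $c \in \Xpm\setminus\{x\}$.

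Taking reverse-and-invert, $v^{-1}$ takes the form $(W')^{-1} \ol{z_j} F_{j-1}^{-1} \cdots \ol{z_1} F_0^{-1}$, with each $F_i^{-1}$ still in $L(\mathcal{E}_{z_{i+1}})$ (that language being closed under reverse-inverse). This reveals a Dyck-like matching of the $z_i$ on the $u$-side with the $\ol{z_i}$ on the $v^{-1}$-side, punctuated by idempotents. The core of the proof is a CFG with productions schematically $S \to E X$, $X \to z X' \ol z Z_z$ (extend the match by some $z \in \Xpm$) or $X \to Y$ (terminate), $X' \to E X$, and $Y \to z E U \# V_z^{-1}$, where $U$ generates $(\Xpm)^*$ and $V_z^{-1}$ denotes a CFG generating the reverse-inverse of $W_z$ (itself context-free, being a reversal and homomorphic image of $W_z$). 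The main technical obstacle is to ensure that the matched letters $z_1, z_2, \ldots$ form a reduced word, so that $y$ is a genuine Cayley-graph vertex and the grammar does not over-generate outside $L_{\rm coWP}$; this is enforced by indexing the nonterminal $X$ by the previously matched letter, a finite-state decoration preserving context-freeness.
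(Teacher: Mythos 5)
Your argument is correct, and while it shares the paper's overall skeleton (split off the free-group co-word problem, treat the two tree-containment failures symmetrically via reverse-and-invert, locate a witness closest to the root, and decompose both words along the path to it using the idempotent languages of Lemma~\ref{idpt}), it diverges in one substantive way. The paper's pieces $K_1$, $K_2$ retain the condition $u =_{\FG(X)} v$, which forces its grammar to synchronise the two words \emph{after} the witnessing edge as well: it must generate matched suffixes $y_1f_1\cdots y_nf_n$ and $y_1f_1'\cdots y_nf_n'$ tracking the common reduced word to the endpoint (the $Q_y$-productions). Your $L_{B_1}$ drops the condition $g_u = g_v$ — harmlessly, since $T_u \not\subseteq T_v$ already implies $u \neq_{\FIM(X)} v$ and the free-group piece absorbs the case $g_u \neq g_v$ — which lets you leave the tail of $u$ completely arbitrary and compress the tail of $v$ into a single auxiliary language $W_x$ of words whose Munn tree avoids the vertex $x$. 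The price is the extra observation that $W_x$ is context-free via $V_x \to Z_x \mid Z_x\, c\, V_{\ol{c}}$ for $c \neq x$; this is sound because a subtree of the Cayley tree containing $\varepsilon$ but not $\ol{c}$ cannot contain any vertex beyond $\ol{c}$, and complete by splitting at the last visit to $\varepsilon$, so the claim checks out. Your use of a missing vertex rather than a missing edge is equivalent, since Munn trees are connected subtrees of the Cayley tree and hence determined by their vertex sets; and both arguments rely in the same way on Lemma~\ref{idpt} and on indexing nonterminals by the previously emitted letter to enforce reducedness. The net effect is a grammar with the same matched-bracket core as the paper's $P_x$-productions but no analogue of $Q_x$, at the cost of one additional mutually recursive family of nonterminals.
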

\begin{proof}
Let $X$ be a finite set. Recalling that for words $u,v \in \Xpm$ we have ${u=_{\FIM(X)} v}$ if and only if the Munn trees $(\varepsilon, T_u, g_u)$ and $(\varepsilon, T_v, g_v)$ are equal, we can express the co-word problem of $\FIM(X)$ as a union of languages
\begin{eqnarray*}
{\rm coWP}(\FIM(X), \Xpm) &=&  K_1 \cup K_2 \cup \, {\rm coWP}(\FG(X), \Xpm) 
\end{eqnarray*} where $K_1$ and $K_2$ are the following subsets:
$$K_1 = \{u\#\ol{v} : u =_{{\rm FG}(X)} v \textrm{ and } T_u\mbox{ contains an edge that } T_v \mbox { does not }\}$$
$$K_2 = \{u\#\ol{v} : u =_{{\rm FG}(X)} v \textrm{ and } T_v\mbox{ contains an edge that } T_u \mbox { does not} \}.$$
Free groups are examples of \emph{co$\CF$ groups} \cite{Holt2005}, meaning that the group-theoretic co-word problem is context-free, from which it follows that ${\rm coWP}(\FG(X), \Xpm)$ is also context-free. Since a union of context-free languages is context-free,  noting the symmetry between $K_1$ and $K_2$ it therefore suffices to produce a context-free grammar for $K_1$.

For $m,n\geq 0$, let $K_{m,n}$ denote the set of all words of the form $u \# \ol{v}$, where \begin{eqnarray}
	u &=& e_1 x_1 \cdots e_m x_m \,\, p_0 x  p_1 \ol{x} p_2 \,\,y_1 f_1 \cdots y_n f_n,\label{u}\\
	v &=& e_1' x_1 \cdots e_m' x_m \;\;\qquad q_x \qquad \,y_1 f'_1 \cdots y_n f'_n,\label{v}
\end{eqnarray}
for some $x_1, \ldots, x_m, x, y_1,\ldots, y_n\in \Xpm$ such that $w:=
x_1\cdots x_m$ and $y:=y_1\cdots y_n$ are reduced words with $x_m \neq \ol{x}$ and $y_1 \neq x$, and the remaining symbols on the right hand side all denote words in $L(\mathcal{E})$, with $e_i'\in L({\mathcal E}_{x_i})$ for $1 \leq i \leq m$, $f'_i\in L({\mathcal E}_{\ol{y}_i})$ for $1 \leq i \leq n$ and $q_x\in L({\mathcal E}_x)$. Here the prefixes $e_1\cdots x_m$ and $e_1'\cdots x_m$ are interpreted as the empty word if $m=0$, and likewise for the suffixes $y_1\cdots f_n$ and $y_1\cdots f_n'$ if $n=0$. We show $K_1 = \bigcup_{m,n} K_{m,n}$. 

It is clear that $K_{0,0} \subseteq K_1$. Suppose that $K_{a,b} \subseteq K_1$ for all $a, b\geq 0$ with $a+b <d$, and let $u\#\ol{v} \in K_{m,n}$ for some $m, n \geq 0$ with $m+n=d$. Thus $u$ and $v$ satisfy equations \eqref{u} and \eqref{v}, subject to the conditions on each factor, and hence in particular $u=_{\FG(x)} v$. By assumption,  $w=x_1\cdots x_m$ and $y=y_1\cdots y_n$ are reduced words and since $x_m \neq x^{-1}$ and $y_1 \neq x$, we note that $wx$ and $x^{-1}y$ are also reduced words. The conditions on the idempotents $e_i'$  and $f_i'$ ensure that the given decomposition of $v$ highlights the \emph{first} occurrence of each $x_i$ 
on the path from $\varepsilon$ to $w$ in $T_v$, and the \emph{last} occurrence of each $y_i$ on the path from $w$ to the terminal point of $T_v$. This, combined with the fact that $q_x$ avoids $x$ and $y_1 \neq x$, ensures that $T_v$ does not contain the edge $(w,wx)$.  Meanwhile, the path corresponding to the subword $x p_1 \ol{x}$ of $u$ starts at vertex $w$, ensuring that $T_u$ contains the edge $(w,wx)$.  Thus, $u \#\ol{v} \in K_1$. 

Conversely, for $u\# \ol{v}\in K_1$, let $w$ and $wx$ be reduced words such that $x \in \Xpm$ and $(w, wx)$ is closest to the root $\varepsilon$ among edges contained in $T_u$ but not in $T_v$.  That is, $w$ (and hence each edge 
on the path labelled by $w$) is contained in both $T_u$ and $T_v$, but the edge labelled by $x$ and starting at $w$ is in $T_u$ but not in $T_v$.  
Let $w = x_1\ldots x_m$ with $x_i\in \Xpm$. We have $x_{i+1}\neq \ol{x}_i$ for all $i$ since $w$ is reduced, and also $x_m\neq \ol{x}$ since $wx$ is reduced. Since $u =_{{\rm FG}(X)} v$ and the reduced word $w$ is a vertex of both $T_u$ and $T_v$ we have $u=e_1x_1\cdots e_m x_m U$ and $v=e_1'x_1\cdots e_m' x_m V$, where $e_i, e_i'$ are idempotents and $ U=_{\FG(X)} V$.  Moreover, the idempotents $e_i'$ may be chosen so that $e_i' \in L(\mathcal{E}_{x_i})$, by insisting that each $x_i$ identified in our decomposition of $v$ is the first possible. By assumption, $T_U$ contains the edge $(\varepsilon, x)$, giving that  $p_0x$ is a prefix of $U$ for some $p_0 \in L(\mathcal{E})$. Moreover, since $U =_{\FG(X)} V$ and $T_V$ does not contain the edge $(\varepsilon,x)$, the path of $U$ must return to vertex $\varepsilon$ after reaching vertex $x$, giving $U=p_0xp_1x^{-1}p_2A$, where $p_0, p_1, p_2 \in L(\mathcal{E})$, with $p_2$ chosen maximally so that no prefix of $A$ is idempotent. Thus $A=y_1f_1\cdots y_nf_n$ for some reduced word $y_1\cdots y_n$ and idempotents $f_1, \ldots, f_n$. Since $U =_{\FG(X)} V=_{\FG(X)}y_1 \cdots y_n$, we must have $V=q_x y_1f_1'\cdots y_nf_n'$ for some $q_x, f_i \in L(\mathcal{E})$ and we may choose $f_i' \in L(\mathcal{E}_{\ol{y_i}})$, by insisting that each $y_i$ identified in our decomposition of $V$ is the last possible. Finally, since $T_V$ does not contain the edge $(\varepsilon,x)$ it is clear that $q_x \in L(\mathcal{E}_x)$ and $y_1 \neq x$. This completes the proof of the claim that $K_1 = \bigcup_{m,n} K_{m,n}$.

We claim now that the context-free grammar $\Gamma$ with set of terminals ${\Xpm \cup \{\#\}}$, non-terminals $\{S, E, P_x,Q_x,Z_x: x \in \Xpm\}$, start symbol $S$, and given by the following productions (for all $x,y\in \Xpm$, subject to the given restrictions) generates $K_1$:
\begin{eqnarray}
	\label{S}S \rightarrow P_x,&& \\ 
	\label{P} P_x \rightarrow E x P_y \ol{x} Z_x, && y \neq \ol{x}\\
	\label{P2}P_x \rightarrow E x E \ol{x} E Q_y Z_x, && y\neq x\\
	\label{Q} Q_x \rightarrow x E Q_y Z_{\ol{x}} \ol{x} \mid \#,  &&  y \neq \ol{x}\\  
	\label{E} E \rightarrow EE \mid xE\ol{x} \mid \varepsilon &&\\ 
	\label{Z} Z_x \rightarrow Z_{x} Z_{x} \mid y Z_{\ol{y}} \ol{y} \mid \varepsilon, &&  y \neq x.
\end{eqnarray}
It is straightforward to see that each word generated by $\Gamma$ can be obtained by first applying \eqref{S}, followed by $m\geq 0$ applications of \eqref{P}, one application of~\eqref{P2}, $n \geq 0$ applications of the first alternative of \eqref{Q}, one application of the second alternative of \eqref{Q}, and concluding with some number applications of \eqref{E} and \eqref{Z}; let us call such any such derivation of a word an $(m,n)$-derivation. For each fixed $m,n \geq 0$ we demonstrate that the language $L_{m,n}$ of all words produced by $(m,n)$-derivations in $\Gamma$ is equal to $K_{m,n}$, and hence $\Gamma$ is a context-free grammar for $K_1$. Let $m,n \geq 1$. By definition, every $(m,n)$-derivation begins as follows:
\begin{eqnarray*}
S \hspace{-2mm} &\rightarrow& \hspace{-2mm} P_{x_1} \rightarrow Ex_1P_{x_2}\ol{x_1}Z_{x_1} \rightarrow \cdots \rightarrow Ex_1\cdots Ex_{m}P_{x}\ol{x_{m}}Z_{x_{m}}\cdots \ol{x_1}Z_{x_1}\\
& \rightarrow& \hspace{-2mm} Ex_1\cdots Ex_{m}ExE\ol{x}EQ_{y_1}Z_{x}\ol{x_{m}}Z_{x_{m}}\cdots \ol{x_1}Z_{x_1}\\
& \rightarrow& \hspace{-2mm} Ex_1\cdots Ex_{m}ExE\ol{x}Ey_1EQ_{y_2}Z_{\ol{y_1}}\ol{y_1}Z_{x}\ol{x_{m}}Z_{x_{m}}\cdots \ol{x_1}Z_{x_1}\\
&\cdots&\\
& \rightarrow& \hspace{-2mm} Ex_1\cdots Ex_{m}ExE\ol{x}Ey_1\cdots Ey_nE\#Z_{\ol{y_n}}\ol{y_n} \cdots Z_{\ol{y_1}}\ol{y_1}Z_{x}\ol{x_{m}}Z_{x_{m}}\cdots \ol{x_1}Z_{x_1},
\end{eqnarray*}
where $x_1, \ldots, x_m, x, y_1,\ldots, y_n \in \Xpm$ are such that $x_{i+1} \neq \ol{x_{i}}$ for $i=1, \ldots, m-1$, $x_m \neq \ol{x}$, $y_1 \neq x$ and $y_{i+1} \neq y_i$ for $i=1, \ldots n-1$. 
Note that the productions in \eqref{E} and \eqref{Z} are the same as those given in Lemma~\ref{idpt}, and hence the non-terminals $E$ and $Z_x$ produce 
the languages $L({\mathcal E})$ and $L({\mathcal E}_x)$ respectively. It is now easily seen that the words generated by $(m,n)$-derivations are exactly those of the form in the definition of $K_{m,n}$, so that $L_{m,n}=K_{m,n}$ for all $m, n \geq 1$. If $m=0$ or $n=0$, then an entirely similar argument (omitting application of the corresponding productions) demonstrates that $L_{m,n}=K_{m,n}$. 
\end{proof}

\section*{Acknowledgements}
{\small The first author was supported by a Scheme 4 grant from the London Mathematical Society during a visit to Manchester, where this work was carried out. The fourth author was at the time of the visit employed by the University of Manchester and supported by the Dame Kathleen Ollerenshaw Trust, and is currently supported by the Mid-Career Researcher Program (RS-2023-00278510) through the National Research Foundation funded by the government of Korea, and by the KIAS Individual Grant MG094701 at Korea Institute for Advanced Study.}


\begin{thebibliography}{9}
\bibitem{Brough2018} T. Brough, Word problem languages for free inverse monoids. In \emph{Descriptional complexity of formal systems}, volume 10952 of \emph{Lecture Notes in Comput. Sci.}, pages 24–-36. Springer, 2018.
\bibitem{Holt2005} D. F. Holt, S. Rees, C. E. R\"over, and R. M. Thomas. Groups with context-free co-word
problem. \emph{J. London Math. Soc.} (2), 71: 643–-657, 2005.
\bibitem{Lehnert2007} Lehnert, J. and Schweitzer, P. The co-word problem for the {H}igman-{T}hompson group is context-free. Bull. Lond. Math. Soc. \textbf{39}:2, pp. 235--241, 2007.
\bibitem{Munn1974} W. D. Munn. Free inverse semigroups. \emph{Proc. London Math. Soc.} (3), 29: 385–-404, 1974.
\bibitem{Scheiblich1973} H. E. Scheiblich. Free inverse semigroups. \emph{Proc. Amer. Math. Soc.}, 38: 1–-7, 1973.
\end{thebibliography}
\end{document}